\patchcmd{\@settitle}{\uppercasenonmath\@title}{\scshape\large}{}{}
\patchcmd{\@setauthors}{\MakeUppercase}{\scshape\normalsize}{}{}
\newtheorem{prop}{Proposition}
\newcommand{\st}{\text{s.t.}}
\newcommand{\define}{\mathrel{{\mathop:}{=}}}
\newenvironment{varsubequations}[1]
{%
  \addtocounter{equation}{-1}%
  \begin{subequations}
    \def\@currentlabel{#1}%

  }
  {%
  \end{subequations}\ignorespacesafterend
}
\newcommand\MyBox[2]{
  \fbox{\lower0.75cm
    \vbox to 1.7cm{\vfil
      \hbox to 1.7cm{\hfil\parbox{1.4cm}{#1\\#2}\hfil}
      \vfil}%
  }%
}
\newcommand{\defset}[3][\defsep]{\set{#2#1#3}}
\newcommand{\Defset}[3][\defsep]{\Set{#2#1#3}}
\newcommand{\set}[1]{\{#1\}}
\newcommand{\Set}[1]{\left\{#1\right\}}
\newcommand{\field}{\mathbb}
\newcommand{\naturals}{\field{N}}
\newcommand{\reals}{\field{R}}
\newcommand{\N}{\naturals}
\newcommand{\R}{\reals}
\newcommand{\AC}{\text{AC}\xspace}
\newcommand{\TP}{\text{TP}\xspace}
\newcommand{\TN}{\text{TN}\xspace}
\newcommand{\FP}{\text{FP}\xspace}
\newcommand{\FN}{\text{FN}\xspace}
\newcommand{\MCC}{\text{MCC}\xspace}
\newcommand{\CRF}{\textsf{C$^2$RF}\xspace}
\newcommand{\pCRF}{\textsf{p-C$^2$RF}\xspace}
\newcommand{\RF}{\textsf{RF}\xspace}
\newcommand{\codename}[1]{\textsf{#1}}
\newcommand{\fcdot}{\,\cdot\,}
\newcommand{\fcarg}[1]{\def\fc@rg{#1}\ifx\fc@rg\empty\fcdot\else\fc@rg\fi}
\newcommand{\abs}[1]{\lvert\fcarg{#1}\rvert}
\newcommand{\card}{\abs}
\newcommand{\rev}[1]{#1}
\begin{document}

\title[MILP for Cardinality-Constrained Random Forests]%
{Mixed-Integer Linear Optimization for Cardinality-Constrained Random
  Forests}

\author[J. P. Burgard, M. E. Pinheiro, M. Schmidt]%
{Jan Pablo Burgard, Maria Eduarda Pinheiro, Martin Schmidt}

\address[J. P. Burgard]{%
  Trier University,
  Department of Economic and Social Statistics,
  Universitätsring 15,
  54296 Trier,
  Germany}
\email{burgardj@uni-trier.de}

\address[M. E. Pinheiro, M. Schmidt]{%
  Trier University,
  Department of Mathematics,
  Universitätsring 15,
  54296 Trier,
  Germany}
\email{pinheiro@uni-trier.de}
\email{martin.schmidt@uni-trier.de}

\date{\today}

\begin{abstract}
  Random forests are among the most famous algorithms for solving
classification problems, in particular for large-scale data sets.
Considering a set of labeled points and several decision trees, the
method takes the majority vote to classify a new given point.
In some scenarios, however, labels are only accessible for a proper
subset of the given points.
Moreover, this subset can be non-representative, e.g., due to
collection bias. Semi-supervised learning considers the setting of
labeled and unlabeled data and often improves the reliability of the
results. In addition, it can be possible to obtain additional
information about class sizes from undisclosed sources. We propose a
mixed-integer linear optimization model for computing a
semi-supervised random forest that covers the setting of labeled and
unlabeled data points as well as the overall number of points in each
class for a binary classification. Since the solution time
rapidly grows as the number of variables increases, we present some
problem-tailored preprocessing techniques and an intuitive branching
rule.
Our numerical results show that our approach leads to better accuracy
and a better Matthews correlation coefficient for biased samples
compared to random forests by majority vote, even if only a few
labeled points are available.


\end{abstract}

\keywords{Random forests,
Semi-supervised learning,
Mixed-integer linear optimization,
Preprocessing,
Cardinality constraints%
%
%
}
\subjclass[2020]{90C11,
90C90,
90-08,
68T99
%
%

\maketitle

\section{Introduction}
\label{sec:introduction}

Random forests are one of the most famous approaches in supervised
learning \parencite{RFBreiman}.
It has been applied to various fields such as the prediction of
diseases \parencite{RFMed2,RFcorona1}, 3D object
recognition \parencite{RF3D} and Fraude and accident
detection \parencite{RFFraud,RFtraffic}.
The main reasons why random forests are popular are that they
prevent over-fitting \parencite{ElementsStatisticLearning},
that they have only a few parameters to tune, and that they can be used
directly for high-dimensional problems \parencite{RF1,RFguidetour}.
The core idea is, given labeled data, to combine the prediction of
different trees, in general, using the majority vote to classify new
points.

Nevertheless, acquiring labels for every unit of interest can be
costly---in particular when classic surveys are used to obtain the
labels.
In this situation, it would be beneficial to train the random forest
with only partly labeled data.
This yields a semi-supervised learning
setting \parencite{semisupervised}.
Algorithms for semi-supervised learning have already been proposed for
neural networks \parencite{SemiSupNN,NeuralSemiSUpervised,NeuralSEMISUP},
logistic regression \parencite{LRSemi,SemiSupRL},
support vector machines \parencite{Chapelle,LAPSVM2}, and decision
trees \parencite{Semisupervised2, SemiSupervised4, Semisupervised3}.

In the case of random forests, \textcite{SemiSupervisedRF} propose
an iterative and deterministic annealing-like training algorithm that
maximizes the multi-class margin of labeled and unlabeled
samples. Furthermore, \textcite{SemiSupervisedRFCO}  extend the
co-training paradigm to random forests, determining how certain the
model is about its predictions for unlabeled data.
Moreover, \textcite{SemiSupervisedRFA} combine active
learning and semi-supervised learning to improve
the final classification performance of random forests by utilizing
supervised clustering to categorize the unlabeled data.

However, in many applications, it is possible to know the total amount
of elements in each class within a population, e.g., when external
sources provide this information.
For instance, a company might only know the overall number of
successful transactions, but might not be able to identify which
specific customer's transactions were successful.
An intuitive example is an online retailer that may track the total
number of good customer reviews but does not have access to individual
ratings due to anonymity practices.
Another example is from healthcare, where it is possible to
know how many patients have a disease but, due to data privacy
reasons, one does not know which specific person is affected or not.
\textcite{Burgard2021CSDA} propose aggregating this extra information
for logistic regression.
They develop a cardinality-constrained multinomial logit model.
For support vector machines, \textcite{constrainedSVM} present a
mixed-integer quadratic optimization model and iterative clustering
techniques to tackle cardinality constraints for each class.
Moreover, for the case of decision trees,
\textcite{burgard2024mixedinteger} propose a mixed-integer linear
optimization model for computing semi-supervised optimal
classification trees that serve the same purpose.

Our contribution here is to propose a random forest model that imposes
a cardinality constraint on the classification of the unlabeled data.
We develop a big-$M$-based mixed-integer linear programming (MILP)
model to solve the cardinality-constrained random forest (C$^2$RF)
problem that includes the cardinality constraint for the unlabeled
data.
The cardinality constraint helps to account for biased samples since
the number of predictions in each class on the population is bounded
by the constraint.
In particular, our numerical results show that our approach
leads to better accuracy and a better Matthews correlation
coefficient for biased samples compared to random forests by majority
vote, even if only few labeled points are available.
The computation time for this MILP grows with the number of
variables---especially for an increasing number of integer
variables.
To account for this, we present theoretical results that lead
to preprocessing techniques that significantly reduce the computation
time.

This paper is organized as follows.
In Section~\ref{sec:MILP-formulation} we present the optimization
model and prove the correctness of the used big-$M$ parameter.
Afterward, the preprocessing techniques are discussed in
Section~\ref{sec:preprocessing} and an intuitive branching
rule is presented in Section~\ref{sec:priority}.
There, we also present our algorithm that combines the mentioned
techniques and the MILP formulation.
In Section~\ref{sec:numerical-results} we report and discuss numerical
results.
Finally, we conclude in Section~\ref{sec:conclusion}.


\section{An MILP Formulation for Cardinality-Constrained Random
  Forests}
\label{sec:MILP-formulation}

Let $X \in \R^{p \times N} = [X_u, X_l]$ be the data matrix with
unlabeled data $X_u = [x^1, \dotsc, x^{m}]$ and labeled data $X_l =
[x^{m+1}, \dotsc, x^N]$.
Hence, we are given points $x^i \in \R^p$ for all $i~\in~ [1,N]
\define \set{1,\dots, N}$.
We set $n \define N -m $ and $y \in \set{-1,1}^{n}$ as the vector of
class labels for the labeled data.
Let $t$ be the number of given decision trees and let $A^j \in
\R^{p\times d}$ be a subset of the labeled data with size $d$ for
$j\in [1,t]$.
For each $j \in [1,t]$, based on each column of $A^j$ and its
label, the $j$th tree generates a vector
$r^j \in \set{-1,1}^m$ that classifies the unlabeled data $X_u$.
Thus, for each unlabeled point $x^i$ we observe a vector of
classification $r_i = [r^1_i, \dots, r^t_i] \in \set{-1,1}^t$.
Hence, $R = [r_1, \dotsc, r_m] \in \set{-1,1}^{t\times m}$ and
$r_i^j$ is the classification of  $x^i \in X_u$ given by the tree~$j$.
In a random forest, the prediction for a point $x^i \in X_u$ is the
dominant class chosen by the individual $t$ trees, i.e., the majority
vote.

In many applications, aggregated information on the labels
is available, e.g., from census data.
For what follows, we assume to know the total number $\lambda \in \N$
of unlabeled points that belong to the positive class and propose a
model such that we can use a linear combination of the tree classifications
as well as $\lambda$ as an additional information.
Our goal is to find optimal parameters $\alpha^* \in \R^t$, $\eta^*
\in \R$, and $z^* \in \set{0,1}^m$ that solve the optimization problem
\begin{varsubequations}{P1}
  \label{eq:Problem1}
  \begin{align}
    \min_{\alpha,\eta,z} \quad
    & \eta \label{eq:RFfunction2}
    \\
    \st \quad
    &\alpha^\top r_i \leq -1 + z_i M, \quad i \in [1,m],
      \label{eq:linearcombination1} \\
    &  \alpha^\top r_i \geq 1 - (1-z_i)M, \quad i \in [1,m],
      \label{eq:linearcombination2} \\
    & \lambda - \eta \leq  \sum_{i=1}^m z_i \leq       \lambda + \eta,
      \label{unlabeled part2} \\
    &\ell \leq \alpha_j \leq u, \quad j \in [1, t] \label{eq:alpha}, \\
    & 0 \leq \eta \leq  \bar{\eta} , \label{eq:eta2} \\
    & z_i \in \set{0,1}, \quad i \in [1,m], \label{eq:binaryz}
  \end{align}
\end{varsubequations}
where $M$ needs to be chosen sufficiently large, $u > \ell >0$ holds,
and we set
\begin{equation}
  \label{bareta}
  \bar{\eta} \define \max \Set{\lambda, m - \lambda}.
\end{equation}

Note that the objective function in \eqref{eq:RFfunction2} minimizes
the classification error for the unlabeled data.
As $z_i$ is binary, Constraints~\eqref{eq:linearcombination1}
and~\eqref{eq:linearcombination2} lead to
\begin{align*}
  \alpha^\top r_i \geq 1 \implies z_i = 1, \quad i \in [1,m],\\
  \alpha^\top r_i \leq -1 \implies z_i = 0,\quad i \in [1,m].
\end{align*}

Constraint~\eqref{unlabeled part2} ensures that the number of
unlabeled data points classified as positive is as close to $\lambda$
as possible.
Constraint~\eqref{eq:alpha} bounds the weight of each tree's decision
for the final classification.
This means that for $j\in [1,t]$, as $\alpha_j$ gets closer to $u$, the
$j$th tree gets greater influence on the final classification, and as
$\alpha_j$ gets closer to $\ell$, the $j$th tree has less influence
on the final classification.
Observe that since  $\alpha_j \geq \ell > 0$ holds for all $j \in
[1,t]$, all trees contribute to the final classification.
Moreover, if $\alpha_j$ has the same value for all $j \in [1,t]$, all
trees contribute equally to the final classification and we are in the
standard random forest setup with majority vote.
Note that the upper bound $u$ is not necessary for the correctness of
the model but will serve as a
big-$M$-type parameter as can be seen in Proposition~\ref{BIGM} below.
The upper bound $\bar{\eta}$ in Constraint~\eqref{eq:eta2} is also
not necessary for the correctness of Model \eqref{eq:Problem1}.
Nevertheless, as can be seen in Proposition~\ref{propeta}, this upper
bound does not cut off any solution.
Hence, we include it in our implementation because we expect that the
solution process benefits from tight bounds. Problem~\eqref{eq:Problem1} is an MILP.
We refer to this problem as C$^2$RF
(Cardinality-Constrained Random Forest).
As usual for big-$M$ formulations, the choice of $M$ is crucial.
If $M$ is too small, the problem can become infeasible or optimal
solutions could be cut off.
If $M$ is chosen too large, the respective continuous relaxations
usually lead to bad lower bounds and solvers may encounter numerical
troubles.
The choice of~$M$ is discussed in the following proposition.

\begin{prop}
  \label{BIGM}
  A valid big-$M$ for Problem~\eqref{eq:Problem1} is given by $M =
  ut+1$, i.e., $M$ is linear in the number of trees in the forest.
\end{prop}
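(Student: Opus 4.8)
The plan is to reduce the validity of the big-$M$ to a single scalar bound on the linear combination $\alpha^\top r_i$ and then verify, by a two-case check on the binary variable $z_i$, that with $M = ut + 1$ each ``switched-off'' inequality in \eqref{eq:linearcombination1}--\eqref{eq:linearcombination2} is automatically satisfied. This will show that no feasible point is cut off while the intended logical implications are enforced exactly.

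First I would establish the key estimate: for every $\alpha$ satisfying the box constraint \eqref{eq:alpha} and every $r_i \in \set{-1,1}^t$, one has
\begin{equation*}
  -ut \leq \alpha^\top r_i \leq ut.
\end{equation*}
This is immediate because each summand $\alpha_j r_i^j$ obeys $\abs{\alpha_j r_i^j} = \alpha_j \leq u$ (using $r_i^j \in \set{-1,1}$ and $0 < \ell \leq \alpha_j \leq u$), and there are $t$ of them. Note that both bounds are attained: taking $\alpha_j = u$ and $r_i^j = 1$ for all $j$ (resp.\ $r_i^j = -1$) yields $\alpha^\top r_i = ut$ (resp.\ $-ut$).

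Next I would substitute $M = ut + 1$ into \eqref{eq:linearcombination1}--\eqref{eq:linearcombination2} and split on $z_i$. For $z_i = 1$, Constraint~\eqref{eq:linearcombination2} becomes the active bound $\alpha^\top r_i \geq 1$, while Constraint~\eqref{eq:linearcombination1} reads $\alpha^\top r_i \leq -1 + M = ut$, which holds for every feasible $\alpha$ by the upper estimate and is hence redundant. Symmetrically, for $z_i = 0$ Constraint~\eqref{eq:linearcombination1} becomes the active bound $\alpha^\top r_i \leq -1$, while \eqref{eq:linearcombination2} reads $\alpha^\top r_i \geq 1 - M = -ut$, which is redundant by the lower estimate. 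Thus the pair \eqref{eq:linearcombination1}--\eqref{eq:linearcombination2} is, for this choice of $M$, exactly equivalent to the two implications displayed after the model, so the big-$M$ neither cuts off feasible $(\alpha,z)$-combinations nor admits spurious ones.

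Finally I would argue validity in the precise sense that no solution of the intended disjunctive model is removed: whenever $\alpha^\top r_i \geq 1$ the assignment $z_i = 1$ stays feasible (the relaxed upper inequality $\leq ut$ never binds), and whenever $\alpha^\top r_i \leq -1$ the assignment $z_i = 0$ stays feasible. I expect the only point requiring care---the ``hard part''---is confirming that $ut$ is the genuine worst case, i.e.\ that the chosen $M$ cannot be lowered: because $\alpha^\top r_i = ut$ is attainable, any $M < ut + 1$ would render the right-hand side $-1 + M < ut$ and thereby forbid $z_i = 1$ for a point the model should classify as positive, cutting off a feasible solution. Keeping the strict versus non-strict inequalities straight at the margin (and noting that the open interval $-1 < \alpha^\top r_i < 1$ is correctly rendered infeasible for both values of $z_i$) then completes the argument.
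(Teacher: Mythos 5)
Your proof is correct and rests on exactly the same key idea as the paper's own proof: the estimate $-ut \le \alpha^\top r_i \le ut$, which follows from $\ell \le \alpha_j \le u$ and $r_i \in \{-1,1\}^t$, so that with $M = ut+1$ the switched-off inequality in each of \eqref{eq:linearcombination1}--\eqref{eq:linearcombination2} is redundant and no feasible point is cut off; your explicit two-case check on $z_i$ merely spells out what the paper leaves implicit. One small caveat: your closing minimality claim (that $M$ cannot be lowered) implicitly assumes the data contain a point $i$ whose vector $r_i$ is constant (all trees agree), so it is data-dependent---but it is not needed for the proposition in any case.
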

\begin{proof}
  For all $i \in [1,m]$ we have $r_i \in \set{-1,1}^t$.
  Moreover, Constraint~\eqref{eq:alpha} ensures that $\alpha_j \leq u$
  holds for all $j\in [1,t]$.
  Hence,
  \begin{equation*}
    \alpha^\top r_i  \leq \sum_{j=1}^t \alpha_j \leq ut
  \end{equation*}
  and
  \begin{equation*}
    \alpha^\top r_i \geq  -\sum_{j=1}^t \alpha_j \geq -ut
  \end{equation*}
  hold for all $i \in [1,m]$ and $ M = ut+1$ does not cut any
  feasible solution.
\end{proof}
The following proposition makes a statement about the upper bound
$\bar{\eta}$ in Constraint~\eqref{eq:eta2}.
\begin{prop}
  \label{propeta}
  Consider Problem~\eqref{eq:Problem1} in which
  Constraint~\eqref{eq:eta2} is replaced by $\eta \geq 0$.
  Then, for every $\eta^*$ as being part of an optimal solution, it
  holds $\eta^* \leq \bar{\eta}$ for~$\bar{\eta}$ as defined
  in~\eqref{bareta}.
\end{prop}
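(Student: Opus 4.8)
The plan is to exploit that the variable $\eta$ enters Model~\eqref{eq:Problem1} only through the objective~\eqref{eq:RFfunction2} and through Constraint~\eqref{unlabeled part2}, together with the nonnegativity bound $\eta \geq 0$ that now replaces~\eqref{eq:eta2}. First I would rewrite the two-sided inequality~\eqref{unlabeled part2} in the equivalent form
\begin{equation*}
  \eta \geq \Abs{\sum_{i=1}^m z_i - \lambda},
\end{equation*}
which makes explicit that, for any fixed feasible pair $(\alpha, z)$, the only lower bounds acting on $\eta$ are this one and $\eta \geq 0$; the latter is redundant since the right-hand side above is already nonnegative.

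Next I would use an optimality argument to pin down the value of $\eta^*$. Let $(\alpha^*, \eta^*, z^*)$ be an optimal solution of the modified problem. Since we minimize $\eta$ and $\eta$ appears in no further constraint, I claim that $\eta^* = \Abs{\sum_{i=1}^m z_i^* - \lambda}$. Indeed, if $\eta^*$ were strictly larger than this quantity, then replacing $\eta^*$ by $\Abs{\sum_{i=1}^m z_i^* - \lambda}$ while keeping $\alpha^*$ and $z^*$ fixed would still be feasible and would strictly decrease the objective, contradicting optimality.

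Finally I would bound this value from above. Because each $z_i^* \in \set{0,1}$, the sum $s \define \sum_{i=1}^m z_i^*$ lies in $[0,m]$, and since $\lambda$ counts the positive points among the $m$ unlabeled ones, we have $0 \leq \lambda \leq m$. As $s \mapsto \abs{s - \lambda}$ is convex, it attains its maximum over $[0,m]$ at an endpoint, so $\abs{s - \lambda} \leq \max\set{\lambda,\, m - \lambda} = \bar\eta$; combining this with the previous step yields $\eta^* \leq \bar\eta$, as claimed. I do not expect a genuine obstacle here, as the whole argument is elementary; the only point requiring a little care is the implicit assumption $\lambda \leq m$ (so that $\abs{m - \lambda} = m - \lambda$), which holds by the interpretation of $\lambda$ as a class count on the unlabeled sample.
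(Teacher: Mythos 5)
Your proof is correct and follows essentially the same route as the paper's: both reduce Constraint~\eqref{unlabeled part2} to the lower bound $\eta \geq \lvert \sum_{i=1}^m z_i - \lambda \rvert$, observe that this quantity is at most $\max\{\lambda, m-\lambda\}$ because $\sum_{i=1}^m z_i \in [0,m]$, and then invoke the fact that $\eta$ is being minimized. If anything, your write-up is more rigorous than the paper's, which only inspects the two extreme classifications (all points positive, all points negative) and concludes informally, whereas you make the key exchange argument explicit—namely that at an optimum $\eta^*$ must equal the absolute deviation, since otherwise it could be strictly decreased while preserving feasibility.
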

\begin{proof}
  Observe that since $z_i \in \set{0,1}$ for all $i \in [1,m]$,
  \begin{equation*}
    0 \leq \sum_{i=1}^m z_i \leq m
  \end{equation*}
  holds.
  Moreover, the maximum value occurs if all points are classified as
  positive.
  If this happens, from Constraint~\eqref{unlabeled part2} we obtain
  \begin{equation*}
    \eta \geq \sum_{i=1}^m z_i - \lambda = m -\lambda.
  \end{equation*}
  On the other hand, the minimum value of $ \sum_{i=1}^m z_i $ occurs
  if all points are classified as negative.
  If this happens, from Constraint~\eqref{unlabeled part2} we obtain
  \begin{equation*}
    \eta \geq \sum_{i=1}^m z_i + \lambda = \lambda.
  \end{equation*}
  Since Problem~\eqref{eq:Problem1} is a minimization Problem,
  $\eta \leq \bar{\eta}$ holds.
  Thus, the upper bound~$\bar{\eta}$ in Constraint~\eqref{eq:eta2}
  does not cut off any optimal point.
\end{proof}


\section{Preprocessing}
\label{sec:preprocessing}

In this section, we present different preprocessing techniques for
Problem~\eqref{eq:Problem1} that can be used to reduce the number of
binary as well as the number of continuous variables.

The first insight is that, if all trees have the same classification
for some unlabeled points, these points must have the same final
classification and, therefore, the respective binary variables
always have the same values.

\begin{prop}
  \label{prop:equalclass}
  Let $k \in [1,m]$ and consider $\mathcal{K} \define \defset{i \in
    [1,m]}{r_i = r_k}$.
  Then, $(\alpha,\eta, z)$ is a feasible point of
  Problem~\eqref{eq:Problem1} if and only if there exists a vector $\bar{z}
  \in \{0,1\}^{m+1-\vert \mathcal{K}\vert}$ such that $(\alpha, \eta,
  \bar{z})$ is a feasible point of the problem
  \begin{varsubequations}{P2}
    \label{eq:Problem1a}
    \begin{align}
      \min_{\alpha,\eta,z} \quad
      & \eta  \\
      \st \quad
      & \alpha^\top r_i \leq -1 + z_i M, \quad i \in \set{k} \cup [1,m]
        \setminus \mathcal{K}, \label{1a:linearcombination1}
      \\
     &   \alpha^\top r_i \geq 1 - (1-z_i)M,  \quad i \in \set{k} \cup
    [1,m] \setminus \mathcal{K}, \label{1a:linearcombination2}
      \\
      &   \lambda - \eta \leq  \sum_{i \in [1,m] \setminus \mathcal{K}} z_i
    + \vert \mathcal{K} \vert  z_k \leq \lambda + \eta,
    \label{unlabeled part3} \\
    & \eqref{eq:alpha},\eqref{eq:eta2}
      \\
   & z_i \in \set{0,1},
    \quad i \in \set{k} \cup [1,m] \setminus \mathcal{K}. \label{1a:z}
    \end{align}
  \end{varsubequations}
\end{prop}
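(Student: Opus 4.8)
The plan is to reduce the whole equivalence to one structural fact: in every feasible point of Problem~\eqref{eq:Problem1}, all binary variables indexed by $\mathcal{K}$ take the same value. Once this is available, each direction amounts to identifying the block $\set{z_i : i \in \mathcal{K}}$ with its single representative $z_k$ and rewriting the cardinality constraint accordingly; the dimension of $\bar{z}$ then matches because $k \in \mathcal{K}$, so that $\set{k} \cup [1,m] \setminus \mathcal{K}$ has exactly $m + 1 - \abs{\mathcal{K}}$ elements.

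First I would establish the key claim. Let $(\alpha, \eta, z)$ be feasible for~\eqref{eq:Problem1}, fix $i \in \mathcal{K}$, and set $s \define \alpha^\top r_k$, so that $\alpha^\top r_i = s$ as well because $r_i = r_k$. Reading Constraints~\eqref{eq:linearcombination1} and~\eqref{eq:linearcombination2} at the two admissible integer values shows that $z_i = 0$ is compatible with $s$ only if $s \le -1$, whereas $z_i = 1$ is compatible only if $s \ge 1$, the complementary inequality in each case holding automatically thanks to the bound $\abs{\alpha^\top r_i} \le ut$ established in Proposition~\ref{BIGM}. Since $z_i$ and $z_k$ are governed by the \emph{same} quantity $s$, they are forced to coincide: both are $0$ if $s \le -1$ and both are $1$ if $s \ge 1$, while the range $-1 < s < 1$ cannot occur in a feasible point. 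This yields
\[
  \sum_{i=1}^m z_i
  = \sum_{i \in [1,m] \setminus \mathcal{K}} z_i + \sum_{i \in \mathcal{K}} z_i
  = \sum_{i \in [1,m] \setminus \mathcal{K}} z_i + \abs{\mathcal{K}}\, z_k,
\]
which is precisely what turns Constraint~\eqref{unlabeled part2} into~\eqref{unlabeled part3}.

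For the forward direction, starting from a feasible $(\alpha, \eta, z)$ of~\eqref{eq:Problem1}, I would take $\bar{z}$ to be the restriction of $z$ to the indices $\set{k} \cup [1,m] \setminus \mathcal{K}$. Constraints~\eqref{1a:linearcombination1} and~\eqref{1a:linearcombination2} then hold because they are the corresponding constraints of~\eqref{eq:Problem1} on this index set, the bounds~\eqref{eq:alpha} and~\eqref{eq:eta2} are untouched, and~\eqref{unlabeled part3} follows from the displayed identity. For the converse, given a feasible $(\alpha, \eta, \bar{z})$ of~\eqref{eq:Problem1a}, I would extend it by copying the representative value across the block, i.e.\ $z_i \define \bar{z}_i$ for $i \in [1,m] \setminus \mathcal{K}$ and $z_i \define \bar{z}_k$ for all $i \in \mathcal{K}$. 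Because $r_i = r_k$ on $\mathcal{K}$, Constraints~\eqref{1a:linearcombination1} and~\eqref{1a:linearcombination2} for the representative index $k$ are verbatim Constraints~\eqref{eq:linearcombination1} and~\eqref{eq:linearcombination2} for every $i \in \mathcal{K}$, so all of them are satisfied; reversing the same splitting recovers~\eqref{unlabeled part2} from~\eqref{unlabeled part3}.

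The hard part will be the forcing argument behind the key claim, since it is the only step that genuinely exploits the big-$M$ structure of~\eqref{eq:linearcombination1}--\eqref{eq:linearcombination2}; everything afterward is bookkeeping with the index set and the dimension count. In writing it up I would be careful to invoke the validity of $M$ from Proposition~\ref{BIGM} when declaring the complementary inequality non-binding, and to record $k \in \mathcal{K}$ explicitly so that the count $m + 1 - \abs{\mathcal{K}}$ is transparent.
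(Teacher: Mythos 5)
Your proof is correct and takes essentially the same route as the paper's: the forward direction restricts $z$ to $\set{k} \cup [1,m] \setminus \mathcal{K}$, the converse copies $\bar{z}_k$ across $\mathcal{K}$, and the cardinality constraint is handled via the identity $\sum_{i \in \mathcal{K}} z_i = \card{\mathcal{K}} z_k$. Your ``key claim'' that feasibility forces $z_i = z_k$ on $\mathcal{K}$ is exactly the step the paper derives inline from Constraints~\eqref{eq:linearcombination1} and~\eqref{eq:linearcombination2}; you simply make the case analysis (and the irrelevance of the slack big-$M$ side) more explicit.
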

\begin{proof}
  Consider $(\alpha,\eta,z)$ a feasible point of \eqref{eq:Problem1}
  and
  \begin{equation*}
    \bar{z}_i = z_i, \quad i \in  \set{k} \cup [1,m] \setminus \mathcal{K}.
  \end{equation*}
  We now prove that $(\alpha, \eta, \bar{z})$ is a feasible point of
  \eqref{eq:Problem1a}.
  Because \eqref{eq:linearcombination1}, \eqref{eq:linearcombination2},
  and \eqref{eq:binaryz} hold, \eqref{1a:linearcombination1},
  \eqref{1a:linearcombination2}, and \eqref{1a:z} are
  satisfied. Moreover, since for all $i \in \mathcal{K}$ it holds $r_i =
  r_k$, we obtain that $ \alpha^\top r_k = \alpha^\top r_i$ holds
  for all $i \in \mathcal{K}$. Hence, by
  Constraint~\eqref{eq:linearcombination1} and
  \eqref{eq:linearcombination2}, we obtain that $z_i = z_k$ also
  holds for all $i \in \mathcal{K}$.
  This together with $\bar{z}_k = z_k$ implies that
  $\card{\mathcal{K}} \bar{z}_k = \sum_{i \in \mathcal{K}} z_i$ is
  satisfied. Hence,
  \begin{equation}
    \label{eq:cardinality}
    \sum_{i \in [1,m] \setminus \mathcal{K}} \bar{z}_i
    + \vert \mathcal{K} \vert  \bar{z}_k  =  \sum_{i \in [1,m]
      \setminus \mathcal{K}} z_i + \sum_{i \in \mathcal{K}} z_i =
    \sum_{i=1}^m z_i
  \end{equation}
  is also satisfied, and, by Constraint~\eqref{unlabeled part2}, we
  obtain that Constraint~\eqref{unlabeled part3} holds.
  Therefore, $(\alpha, \eta, \bar{z})$ is a feasible point of
  Problem~\eqref{eq:Problem1a}.

  On the other hand, let $(\alpha, \eta, \bar{z})$ be a feasible
  point of Problem~\eqref{eq:Problem1a} and set
  \begin{equation}
    \label{eq:buildz}
    z_i = \begin{cases}
      \bar{z}_i, \text{ if } i \notin \mathcal{K},\\
      \bar{z}_k, \text{ otherwise}.
    \end{cases}
  \end{equation}
  Since \eqref{1a:linearcombination1}, \eqref{1a:linearcombination2}
  and \eqref{1a:z} are satisfied, \eqref{eq:linearcombination1} and
  \eqref{eq:linearcombination2} hold for each $i \notin \mathcal{K}$
  and \eqref{eq:binaryz} holds for all $i \in [1,m]$. Further, because
  each $i \in \mathcal{K}$ satisfies $r_i=r_k$,  $\alpha^\top r_i =
  \alpha^\top r_k$ holds for all $i \in \mathcal{K}$. Hence,  by
  Constraints~\eqref{1a:linearcombination1} and
  \eqref{1a:linearcombination2} we obtain that
  \begin{equation*}
    1-(1-z_i)M= 1-(1-\bar{z}_k)M \leq  \alpha^\top r_i \leq  -1+
    \bar{z}_k M = -1+z_i M
  \end{equation*}
  is satisfied for all $i \in \mathcal{K}$. Besides that, Expression
  \eqref{eq:buildz} implies that  $\vert \mathcal{K} \vert \bar{z}_k =
  \sum_{i \in \mathcal{K}} z_i$ and, therefore,
  Expression~\eqref{eq:cardinality} also holds. Hence,   by
  Constraint~\eqref{unlabeled part3}, we obtain that
  Constraint~\eqref{unlabeled part2} is satisfied.
  Therefore, $(\alpha, \eta,z)$ is a feasible point of
  Problem~\eqref{eq:Problem1}.
\end{proof}

The following proposition shows that if one or more trees classify all
points exactly as another tree, some continuous variables of
Problem~\eqref{eq:Problem1} can be eliminated.
\begin{prop}\label{prop:equaltrees}
  Given $g \in [1,t]$, consider $\mathcal{G} \define \defset{j \in
    [1,t]}{r^g = r^j}$.
  Then, $(\alpha^*, \eta^*, z^*)$ is a solution to
  Problem~\eqref{eq:Problem1} if and only if there exists a vector
  $\bar{\alpha} \in \mathbb{R}^{t+1-\vert \mathcal G \vert} $ such
  that $(\bar{\alpha}, \eta^*, z^*)$ is a solution to the problem
  \begin{varsubequations}{P3}
    \label{eq:Problem2}
    \begin{align}
      \min_{\alpha,\eta,z} \quad
      & \eta \
      \\
      \st \quad
      & \sum_{j \in [1,t] \setminus \mathcal{G}}\alpha_j r_i^j + \vert
        \mathcal{G} \vert \alpha_gr_i^g \leq -1 + z_i M,
        \quad i \in [1,m],
        \label{eq:linearcombination1p2}
      \\
      & \sum_{j \in [1,t] \setminus \mathcal{G}} \alpha_j r_i^j + \vert
        \mathcal{G} \vert \alpha_gr_i^g \geq 1 -(1- z_i) M,
        \quad i \in [1,m],
        \label{eq:linearcombination2p2}
      \\  & \eqref{unlabeled part2}, \\
      & \ell \leq \alpha_j \leq u,
        \quad j \in \set{g}\cup[1,t]\setminus\mathcal{G},
        \label{eq:alpha2}\\
         & \eqref{eq:eta2}, \eqref{eq:binaryz}.
    \end{align}
  \end{varsubequations}
\end{prop}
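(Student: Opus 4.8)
The plan is to mirror the proof of Proposition~\ref{prop:equalclass}, only now reducing continuous variables instead of binary ones. The guiding observation is that $r^j = r^g$ for every $j \in \mathcal{G}$ means $r_i^j = r_i^g$ for all $i \in [1,m]$, so the block of the linear combination coming from $\mathcal{G}$ collapses to a single term: $\sum_{j \in \mathcal{G}} \alpha_j r_i^j = \bigl(\sum_{j \in \mathcal{G}} \alpha_j\bigr) r_i^g$. The whole argument therefore reduces to showing that the scalar $\sum_{j\in\mathcal{G}}\alpha_j$ in \eqref{eq:Problem1} and the scalar $\vert\mathcal{G}\vert\alpha_g$ in \eqref{eq:Problem2} range over exactly the same interval, while $\eta$ and $z$ are left untouched. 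Since the objective is $\eta$ in both problems, a feasibility correspondence preserving $\eta$ immediately yields the claimed correspondence of optimal solutions.

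For the forward direction I would take a feasible point $(\alpha,\eta,z)$ of \eqref{eq:Problem1} and define $\bar\alpha_j = \alpha_j$ for $j \in [1,t]\setminus\mathcal{G}$ together with $\bar\alpha_g = \frac{1}{\vert\mathcal{G}\vert}\sum_{j\in\mathcal{G}}\alpha_j$. Substituting $r_i^j=r_i^g$ gives $\vert\mathcal{G}\vert\bar\alpha_g r_i^g = \sum_{j\in\mathcal{G}}\alpha_j r_i^g = \sum_{j\in\mathcal{G}}\alpha_j r_i^j$, so the values $\alpha^\top r_i$ are unchanged and Constraints~\eqref{eq:linearcombination1p2} and~\eqref{eq:linearcombination2p2} follow from \eqref{eq:linearcombination1} and~\eqref{eq:linearcombination2}. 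As $\bar\alpha_g$ is an average of numbers in $[\ell,u]$, it again lies in $[\ell,u]$, so bound~\eqref{eq:alpha2} holds; the constraints \eqref{unlabeled part2}, \eqref{eq:eta2}, and~\eqref{eq:binaryz} are untouched because $\eta$ and $z$ do not change.

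For the reverse direction I would take a feasible point $(\bar\alpha,\eta,z)$ of \eqref{eq:Problem2} and lift it by setting $\alpha_j=\bar\alpha_j$ for $j\notin\mathcal{G}$ and $\alpha_j=\bar\alpha_g$ for every $j\in\mathcal{G}$. The same identity $r_i^j=r_i^g$ shows $\sum_{j\in\mathcal{G}}\alpha_j r_i^j=\vert\mathcal{G}\vert\bar\alpha_g r_i^g$, so $\alpha^\top r_i$ is again preserved and \eqref{eq:linearcombination1}, \eqref{eq:linearcombination2} hold. Each replicated coordinate equals $\bar\alpha_g\in[\ell,u]$, so \eqref{eq:alpha} holds, while \eqref{unlabeled part2}, \eqref{eq:eta2}, and~\eqref{eq:binaryz} carry over verbatim.

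Combining the two maps, a pair $(\eta,z)$ extends to a feasible point of \eqref{eq:Problem1} if and only if it extends to a feasible point of \eqref{eq:Problem2}. Since the objective is $\eta$ alone, the two problems then share the same optimal value, and the two constructions carry optimal points to optimal points, giving the stated biconditional. The one place where care is genuinely needed—and the step I expect to be the crux—is the bookkeeping of the box constraints: one must verify that averaging keeps $\bar\alpha_g$ inside $[\ell,u]$ and that replication keeps every $\alpha_j$ inside $[\ell,u]$, i.e.\ that both $\sum_{j\in\mathcal{G}}\alpha_j$ and $\vert\mathcal{G}\vert\alpha_g$ sweep out the same interval $[\vert\mathcal{G}\vert\ell,\vert\mathcal{G}\vert u]$. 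Everything else is a direct transcription of the proof of Proposition~\ref{prop:equalclass}.
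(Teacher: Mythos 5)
Your proposal is correct and takes essentially the same route as the paper's proof: the same averaging construction $\bar\alpha_g = \frac{1}{\vert\mathcal{G}\vert}\sum_{j\in\mathcal{G}}\alpha_j$ for the forward direction, the same replication $\alpha_j=\bar\alpha_g$ for the reverse direction, and the same key identity $\sum_{j\in\mathcal{G}}\alpha_j r_i^j = \vert\mathcal{G}\vert\bar\alpha_g r_i^g$ showing that all constraint values are preserved. Your explicit closing argument that a feasibility correspondence fixing $\eta$ transfers optimality is a step the paper leaves implicit (its proof only verifies feasibility of the mapped points while asserting they are solutions), so your version is, if anything, marginally more complete.
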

\begin{proof}
  Let $(\alpha^*, \eta^*, z^*)$ be a solution to
  Problem~\eqref{eq:Problem1} and
  \begin{equation*}
    \bar{\alpha}_ j=
    \begin{cases}
      \alpha_j^*,
      & \text{if } j \notin \mathcal{G},\\
      \sum_{j \in \mathcal{G}}\alpha_j^*/\vert  \mathcal{G} \vert ,
      & \text{otherwise.}
    \end{cases}
  \end{equation*}
  Since $\ell \leq  \alpha_j^* \leq u$ holds for all $j \in [1,t]$,
  we obtain
  \begin{equation*}
    \ell = \frac{\ell}{\vert\mathcal{G}\vert}( \vert\mathcal{G}\vert)
    \leq \bar{\alpha}_g
    \leq \frac{u}{\vert\mathcal{G}\vert}( \vert\mathcal{G}\vert)
    = u.
  \end{equation*}
  Moreover, because $r^g = r^j$ is satisfied for all $j \in
  \mathcal{G}$,
  \begin{equation}
    \label{prop31}
    \sum_{j \in \mathcal{G}}\alpha_j^*r_i^j
    = r_i^g \sum_{j \in \mathcal{G}}\alpha_j^*
    = \vert  \mathcal{G} \vert \bar{\alpha}_g r_i^g
  \end{equation}
  holds for all $i \in [1,m]$.
  Hence, for all $i \in [1,m]$,
  \begin{equation}
    \label{prop3p2}
    \sum_{j = [1,t] \setminus \mathcal{G}} \bar{\alpha}_j r_i^j
    + \vert \mathcal{G} \vert \bar{\alpha}_gr_i^g
    = \sum_{j = [1,t] \setminus \mathcal{G}} \alpha_j^* r_i^j
    + \sum_{j \in \mathcal{G}}\alpha_j^*r_i^j = (\alpha^*)^\top r_i
  \end{equation}
  is satisfied and, consequently,
  \begin{equation*}
    1 - (1-z_i^*)M
    \leq \sum_{j = [1,t] \setminus \mathcal{G}} \bar{\alpha}_j r_i^j
    + \vert \mathcal{G} \vert \bar{\alpha}_gr_i^g
    \leq -1 + z_i^* M
  \end{equation*}
  holds for $i \in [1,m]$.
  Therefore, $(\bar{\alpha}, \eta^*, z^*)$ is a solution to
  Problem~\eqref{eq:Problem2}.

  On the other hand, let $(\bar{\alpha}, \eta^*, z^*)$ be a solution
  to Problem~\eqref{eq:Problem2} and set
  \begin{equation*}
    \alpha^*_j =
    \begin{cases}
      \bar{\alpha}_j, & \text{if } j \notin \mathcal{G},\\
      \bar{\alpha}_g, & \text{otherwise}.
    \end{cases}
  \end{equation*}
  Since \eqref{eq:alpha2} holds, \eqref{eq:alpha} is satisfied for all
  $j \in [1,t]$.
  Besides that, since $r^g = r^j$ is satisfied for all $j \in
  \mathcal{G}$, \eqref{prop31} and \eqref{prop3p2} also hold for all
  $i \in [1,m]$.
  Hence, for all $i \in [1,m]$, we have
  \begin{equation*}
    1 - (1-z_i^*)M \leq  (\alpha^*)^\top r_i \leq -1 + z_i^* M
  \end{equation*}
  and $(\alpha^*, \eta^*, z^*)$ is a solution to
  Problem~\eqref{eq:Problem1}.
\end{proof}

Finally, the following proposition allows to fix some binary
variables~$z_i$ of Problem~\eqref{eq:Problem1}.

\begin{prop}
  \label{propfix}
  For each $i \in [1,m]$, consider
  $\mathcal{A}_i = \defset{j \in [1,t]}{r_i^j =-1}$
  and $\mathcal{B}_i = \set{j \in [1,t]: r_i^j =1}$.
  If for some $i \in [1,m]$,
  \begin{equation}
    \label{eq:bandc}
    \varphi_i \define -u \card{\mathcal{A}_i} + \ell
    \card{\mathcal{B}_i} \geq 1
  \end{equation}
  holds, then every feasible point $(\alpha, \eta, z)$ of
  Problem~\eqref{eq:Problem1} satisfies $z_i = 1$.
  If, on the other hand,
  \begin{equation}
    \label{eq:candb}
    \phi_i \define- \ell \card{\mathcal{A}_i} + u \card{\mathcal{B}_i} \leq -1
  \end{equation}
  is satisfied for some $i \in [1,m],$ then any feasible point
  $(\alpha, \eta, z)$ of Problem~\eqref{eq:Problem1} satisfies $z_i =
  0$.
\end{prop}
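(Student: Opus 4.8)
The plan is to recognize that the quantities $\varphi_i$ and $\phi_i$ appearing in \eqref{eq:bandc} and \eqref{eq:candb} are precisely the minimal and maximal values that the linear combination $\alpha^\top r_i$ can attain over the feasible region, after which the two fixings follow immediately from the implications already recorded below Model~\eqref{eq:Problem1}. First I would split the inner product according to the sign of the tree classifications, writing
\begin{equation*}
  \alpha^\top r_i = \sum_{j=1}^t \alpha_j r_i^j = -\sum_{j \in \mathcal{A}_i} \alpha_j + \sum_{j \in \mathcal{B}_i} \alpha_j,
\end{equation*}
which is valid because $r_i^j = -1$ for $j \in \mathcal{A}_i$ and $r_i^j = 1$ for $j \in \mathcal{B}_i$, while $\mathcal{A}_i$ and $\mathcal{B}_i$ partition $[1,t]$.

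For the first claim, I would assume \eqref{eq:bandc} and bound the two sums in opposite directions using the box constraint~\eqref{eq:alpha}: on $\mathcal{A}_i$ each summand $-\alpha_j$ is at least $-u$, and on $\mathcal{B}_i$ each $\alpha_j$ is at least $\ell$. This gives $\alpha^\top r_i \geq -u\card{\mathcal{A}_i} + \ell \card{\mathcal{B}_i} = \varphi_i \geq 1$ for every feasible point. If $z_i$ were $0$, then Constraint~\eqref{eq:linearcombination1} would force $\alpha^\top r_i \leq -1$, contradicting $\alpha^\top r_i \geq 1$; hence $z_i = 1$. Equivalently, one may directly invoke the implication $\alpha^\top r_i \geq 1 \Rightarrow z_i = 1$ noted after the model.

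For the second claim, I would assume \eqref{eq:candb} and argue symmetrically: on $\mathcal{A}_i$ each $-\alpha_j$ is at most $-\ell$ and on $\mathcal{B}_i$ each $\alpha_j$ is at most $u$, so $\alpha^\top r_i \leq -\ell \card{\mathcal{A}_i} + u \card{\mathcal{B}_i} = \phi_i \leq -1$ for every feasible point. Since Constraint~\eqref{eq:linearcombination2} would demand $\alpha^\top r_i \geq 1$ whenever $z_i = 1$, it can only be satisfied with $z_i = 0$, which yields the desired fixing.

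The single point requiring care is matching the correct endpoint of the interval $[\ell, u]$ to the sign of each $r_i^j$ so as to obtain the genuine extremum of $\alpha^\top r_i$: for the lower bound one uses $u$ on the negatively-signed indices and $\ell$ on the positively-signed ones, and the reverse for the upper bound. Once these extrema are correctly identified with $\varphi_i$ and $\phi_i$, the remainder is a direct application of \eqref{eq:linearcombination1}--\eqref{eq:linearcombination2}, so I do not expect any substantial obstacle.
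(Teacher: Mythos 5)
Your proof is correct and follows essentially the same route as the paper: the same sign-split of $\alpha^\top r_i$ over $\mathcal{A}_i$ and $\mathcal{B}_i$, the same use of the box constraint~\eqref{eq:alpha} to bound the inner product by $\varphi_i$ (from below) and $\phi_i$ (from above), and the same invocation of Constraints~\eqref{eq:linearcombination1} and~\eqref{eq:linearcombination2} to force $z_i = 1$ and $z_i = 0$, respectively. The only difference is cosmetic: you spell out the contradiction argument that the paper leaves implicit when citing those constraints.
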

\begin{proof}
  Since $ \ell \leq \alpha_j \leq u$ is satisfied for all $j \in [1,t]$,
  if for some $i \in [1,m]$,
  \begin{equation*}
     -u\vert \mathcal{A}_i \vert + \ell \vert \mathcal{B}_i\vert \geq 1
  \end{equation*}
  holds, we obtain
  \begin{equation*}
    \alpha^\top r_i = -\sum_{j \in \mathcal{A}_i} \alpha_j + \sum_{j \in \mathcal{B}_i}
    \alpha_j
    \geq -u \vert \mathcal{A}_i \vert + \ell \vert \mathcal{B}_i \vert \geq 1,
  \end{equation*}
  and by Constraint~\eqref{eq:linearcombination1}, $z_i =1$.
  On the other hand, if for some $i \in [1,m]$, it holds
  \begin{equation*}
    - \ell \vert \mathcal{A}_i\vert + u \vert \mathcal{B}_i \vert  \leq -1,
  \end{equation*}
  we get
  \begin{equation*}
    \alpha^\top r_i = -\sum_{j \in \mathcal{A}_i} \alpha_j + \sum_{j \in \mathcal{B}_i}
    \alpha_j
    \leq - \ell \vert \mathcal{A}_i\vert + u \vert \mathcal{B}_i \vert  \leq -1,
  \end{equation*}
  and by Constraint~\eqref{eq:linearcombination2}, $z_i =0$.
\end{proof}

Consider now
\begin{equation*}
  \mathcal{P} \define \Defset{i\in [1,m]}{\varphi_i\geq 1}
  \quad \text{and} \quad
  \mathcal{N} \define \Defset{i\in [1,m]}{\phi_i \leq -1}.
\end{equation*}
Then, $\card{\mathcal{P}} + \card{\mathcal{N}}$ binary variables can
be fixed.
Moreover, $ \card{\mathcal{P}}$ points then are already classified as
positive.
If $ \vert{\mathcal{P}} \vert \geq \lambda$, due to cardinality
constraint, all remaining points $x^i \in X_u \setminus (\mathcal{P}
\cup \mathcal{N})$ must be classified as
negative, and $\lambda$ can be set to~$0$.
On the other hand, if $\card{\mathcal{P}} < \lambda$,
only $\lambda - \card{\mathcal{P}}$ points in
$X_u \setminus (\mathcal{P}
\cup \mathcal{N})$ must be classified
as positive.
This update is present in Step~\ref{algo:step18} in
Algorithm~\ref{algopre} below.


\section{Branching Priorities}
\label{sec:priority}

One aspect that can significantly affect the performance of MILP
solvers is the applied branching rule.
In this brief section, we present a problem-tailored rule for helping
the MILP code to solve Problem~\eqref{eq:Problem1}.
To this end, let us consider binary variables~$z_i, z_k \in \{0,1\}$,
$i,k \in [1,m]$, and positive integer values~$\xi_i$ and $\xi_k$ so that
$\xi_i > \xi_k$ implies that the solver should branch on $z_i$ before
$z_k$.
In our context, a point for which the percentage of trees that
classify the point as positive (or negative) is larger than for
another point seems to be ``easier'' to classify.
Hence, we want to branch on the respective binary variable first.
Based on that, we establish a criterion for a branching strategy.
We set $\theta_i = \card{\text{mean}( r_i )}$ for each $x^i \in X_u
\setminus (\mathcal{P} \cup \mathcal{N})$.
Observe that the higher the value of $\theta_i$, the more trees
classify the point~$x^i$ in one specific class.
Hence, we consider $\xi_i$ the position of $\theta_i$ in the vector of
the increasingly sorted values of $\theta$.
Thus, the higher the value of $\theta_i$, the higher the value of
$\xi_i$, and hence, the higher the branching priority for the binary
variable~$z_i$.

Motivated by the preprocessing techniques presented in
Section~\ref{sec:preprocessing} and the branching priority discussed
in this section, we obtain Algorithm~\ref{algopre} to
solve Problem~\eqref{eq:Problem1}.

\begin{algorithm}
  \label{algopre}
  \caption{p-C$^2$RF: Preprocessing and Solving C$^2$RF}
  \SetKwInput{Input}{Input~}
  \Input{$R\in \set{-1,1}^{t\times m}, u>\ell >0,$ $\lambda \in
    \mathbb{N}$, $\mathcal{K} = \emptyset$, $\beta =0$, and
    $\gamma=0$.}
  Compute $M = ut +1$ and  $\bar{R}=[\bar{r}_1,\dotsc, \bar{r}_h] \in
  \set{-1,1}^{t \times h}$ being the set of all different $r_i \in R.$
  \\
  \For{$k \in \set{1, \dotsc, h}$ }
  {Compute $w_k = \left\vert\{i \in [1,m]:r_i= \bar{r}_k
      \}\right\vert$, $ \varphi_k$ as described in \eqref{eq:bandc},
    and $\phi_k$ as described in \eqref{eq:candb}.
    \\
    \uIf{$\varphi_k \geq 1$}{Set $\mathcal{K}\leftarrow \mathcal{K}
      \cup \{k\}$ and $\beta \leftarrow  \beta+w_k$.}
    \ElseIf{$\phi_k \leq -1$}{Set $\mathcal{K}\leftarrow \mathcal{K}
      \cup \{k\}$ and $\gamma \leftarrow  \gamma+w_k$.}}
  Compute $S = [s^1, \dotsc, s^q]^\top \in \set{-1,1}^{q\times h}$
  being the set of all different $\bar{r}^j \in \bar{R}$.\\
  \For{$g \in \set{1, \dotsc, q}$}
  {Compute $v_g = \left\vert\{j \in
      [1,t]:r^j = \bar{r}^g \}\right\vert$ and
    set $s^g \leftarrow v_gs^g$.}
  \For{$i \in  \set{1, \dotsc, h }\backslash \mathcal{K}$}{Compute
    $ \theta_i = \vert \text{mean} (s_i) \vert $.}
  \For{$i \in  \set{1, \dotsc, h }\backslash \mathcal{K}$}
  {Compute $\xi_i$, i.e., the position of $\theta_i$ in the vector of the
    increasingly sorted values of $\theta$. \label{algo:stepBR}}
  Compute $\bar{\lambda} = \min\{0, \lambda -
  \beta\}$ and $\bar{\eta} = \max\{\bar{\lambda},m - \beta -\gamma-
  \bar{\lambda} \}$ and solve \label{algo:step18}
  \begin{align*}
    \min_{\alpha,\eta,z} \quad
    & \eta
    \\
    \st \quad
    &\alpha^\top s_i \leq -1 + z_iM,
      \quad i \in [1,h]\setminus\mathcal{K},
    \\
    & \alpha^\top s_i \geq 1 - (1-z_i)M,
      \quad i \in [1,h]\setminus\mathcal{K},
    \\
    & \bar{\lambda} - \eta \leq \sum_{i \in
      [1,h]\setminus\mathcal{K}} w_iz_i \leq \bar{\lambda} + \eta,
    \\
    & \ell \leq \alpha_j \leq u,
      \quad j \in [1, q],
    \\
    & 0 \leq \eta \leq  \bar{\eta} ,
    \\
    & z_i \in \set{0,1},
      \quad i \in [1,h]\setminus\mathcal{K}.
  \end{align*}
  with branching priorities $\xi$ to compute $\alpha^*,\eta^*,z^*$.
  \label{Step:Solve Problem}
\end{algorithm}



\section{Numerical Results}
\label{sec:numerical-results}

In this section, we present and discuss our computational results that
demonstrate the impact of considering the total amount of points in
each class and of using the preprocessing techniques as well as the
branching rule to speed up the solution process.

We illustrate this on different test sets from the literature.
The test sets are discussed in Section~\ref{subsec:test-sets}, while
the computational setup is described in
Section~\ref{subsec:computational-setup}.
The evaluation criteria are depicted in
Section~\ref{subsec:evaluation-criteria}.
Finally, the numerical results are discussed in
Section~\ref{sec:discussion_run_times} and
\ref{sec:discussion_accuracy_mcc}.

\subsection{Test Sets}
\label{subsec:test-sets}

For the computational analysis of the proposed approaches, we consider
the subset of instances presented by \textcite{Olson2017PMLB} that are
suitable for classification problems and that have at most three
classes and at least 5000 points.
Repeated instances are removed and instances with missing information
are reduced to the observations without missing information.
If three classes are given in an instance, we transform them into two
classes such that the class with label~1 represents the
positive class and the other two classes represent the negative
class. This results in a final test set of 13~instances, as listed in
Table~\ref{table1}. To avoid numerical instabilities, all data sets
are scaled as follows.
For each coordinate $j \in [1,p]$, we compute
\begin{equation*}
  l_j = \min_{i \in [1,N]}\Set{x^i_j},
  \quad
  u_j = \max_{i \in [1,N]}\Set{x^i_j},
  \quad
  m_j = 0.5 \left(l_j + u_j \right)
\end{equation*}
and shift each coordinate~$j$ of all data points~$x^i$ via $\bar{x}^i_j
= x^i_j - m_j$. Furthermore, if a coordinate~$j$ of the
re-scaled points is still large, i.e., if $\tilde{l}_j = l_j - m_j <-
10^{2}$ or $\tilde{u}_j = u_j - m_j > 10^{2}$ holds, it is re-scaled
via
\begin{equation*}
  \tilde{x}^i_j = (\bar{v} - \underline{v} ) \frac{\bar{x}^i_j -
    \tilde{l}_j}{\tilde{u}_j-\tilde{l}_j} + \bar{v}
\end{equation*}
with $\bar{v} = 10^2$ and $ \underline{v} = -10^{2}$.
The corresponding 4 instances that we re-scale are marked with an
asterisk in Table~\ref{table1}.
\begin{table}
  \caption{The entire test set with the number of
    points ($N$) and the dimension ($p$)}
  \label{table1}
  \begin{tabular}{c c c c }
    \toprule
    ID & Instance  &$ N $ &$p$ \\
    \midrule
    1         &  phoneme       &  5349  &  5  \\
    2                        &  magic                        &  18\,905 &  10 \\
    $3^*$ &  adult   &  48\,790 &  14   \\
    $4^*$ &  churn      &  5000  &  20   \\
    $5^*$ &  ring        &  7400  &  20   \\
    6 &  twonorm         &  7400  &  20   \\
    7 &  waveform\_21        &  5000  &  21   \\
    8 &  ann\_thyroid            &  7129  &  21   \\
    9 &  agaricus\_lepiota         &  8124  &  22   \\
    10 &  waveform\_40    &  5000  &  40   \\
    11 &  connect\_4                   &  67\,557 &  42   \\
    12 &  coil2000       &  8380  &  85   \\
    $13^*$ &  clean2                  &  6598  &  168  \\
    \bottomrule
  \end{tabular}
\end{table}


In our computational study, we focus on emphasizing the statistical
importance of cardinality constraints---mainly in the case of
non-representative biased samples.
Biased samples are highly recurrent in non-probability surveys, which
are surveys with an inclusion process that is not tracked and, hence, the
inclusion probabilities are unknown.
This means that correction methods such as inverse inclusion probability
weighting cannot be applicable. For a primer on inverse
inclusion probability weighting, we refer to \textcite{Skinner2011} and
the references therein.

To reproduce such a scenario, we create 5 biased samples with
\SI{1}{\percent} of the data being labeled for each instance.
Differently from a simple random sample, where each point has an equal
probability of being chosen as labeled data, in these biased samples the
labeled data is chosen with probability $\SI{85}{\percent}$ for
belonging to the positive class. Moreover, we consider
$t=20$ trees and for each $j \in [1,t]$, the size of the training subset
$A^j$ is $\SI{20}{\percent}$ of the labeled data. For each training
subset we use the \textsf{Decision Tree} package \parencite{DTJULIA}
to generate $r^j$.

In addition, in Appendix~\ref{sec:num-results-simple-sample}, we
provide the results under simple random sampling, which produces
unbiased samples. In this scenario, the results of the proposed
methods are similar to the random forest. Hence, there is no downside
to using the proposed method in case of an unknown sampling process.

\subsection{Computational Setup}
\label{subsec:computational-setup}

For each one of the 65 instances described in
Section~\ref{subsec:test-sets}, we compare the following approaches.
\begin{enumerate}
\item[(a)] \textsf{RF}: Random Forest by majority vote.
\item[(b)] \textsf{C$^2$RF} as given in Problem~\eqref{eq:Problem1} with
  $\bar{\eta}$ as defined in~\eqref{bareta} and $M$ from
  Proposition~\ref{BIGM}.
\item[(c)] \textsf{p-C$^2$RF} as described in Algorithm~\ref{algopre}.
\item [(d)]\textsf{only PP}: Algorithm~\ref{algopre} without the
  branching rule described in Step~\ref{algo:stepBR}.
\item[(e)] \textsf{only BR}:  \textsf{C$^2$RF} as given in
  Problem~\eqref{eq:Problem1} with the branching rule as described
  in Section~\ref{sec:priority} but without our problem-tailored
  preprocessing.
\end{enumerate}
Our comparison has been implemented in \codename{Julia}~1.8.5 and we
use \codename{Gurobi}~11.5 and
\codename{JuMP} \parencite{DunningHuchetteLubin2017} to solve
\textsf{C$^2$RF} as well as the MILP in Algorithm~\ref{algopre}. All
computations were executed on the high-performance cluster
``Elwetritsch'', which is part of the ``Alliance of High-Performance
Computing Rheinland-Pfalz'' (AHRP). We use a single Intel XEON SP
6126 core with \SI{2.6}{\giga\hertz} and \SI{64}{\giga\byte}~RAM as
well as a time limit of \SI{7200}{\second}.

Based on our preliminary experiments, for \textsf{C$^2$RF} and
\textsf{p-C$^2$RF} we set the bounds to $\ell = 1$ and $u = 100$.
Moreover, we set the \textsf{MIPFocus} parameter of \textsf{Gurobi}
to~3.

\subsection{Evaluation Criteria}
\label{subsec:evaluation-criteria}

The first evaluation criterion is the run time of the different
methods.
To compare run times, we use empirical cumulative distribution
functions (ECDFs).
Specifically, for $S$ being a set of solvers (or approaches as
above) and for $P$ being a set of problems, we denote by
$t_{p,s} \geq 0$ the run time of the approach~$s \in S$ applied to
the problem~$p \in P$ in seconds.
If $t_{p,s} > 7200$, we consider problem~$p$ as not
being solved by approach~$s$.
With these notations, the performance profile of approach~$s$ is
the graph of the function~$\gamma_{s} : [0, \infty) \to [0,1]$
given by
\begin{equation*}
  \gamma_{s}(\sigma) = \frac{1}{\vert P
    \vert}\card{ \Defset{p \in P}{t_{p,s} \leq \sigma} }.
\end{equation*}

Moreover, knowing the true label of all points, we categorize them
into four distinct categories: true positive (TP) or true negative
(TN) if the point is classified correctly in the positive or negative
class, respectively, as well as false positive (FP) if the point is
misclassified in the positive class and as false negative (FN) if the
point is misclassified in the negative class.
Motivated by this, we compute two classification metrics, for which a
higher value indicates a better classification.
The first one is accuracy ($\AC$).
It measures the proportion of correctly classified points and is given
by
\begin{equation}
  \label{AC}
  \AC \define \frac{\TP + \TN}{\TP + \TN + \FP + \FN} \in [0,1].
\end{equation}

The second metric is Matthews correlation coefficient ($\MCC$). It
measures the correlation coefficient between the observed and
predicted classifications and is computed by
\begin{equation}
  \label{MCC}
  \MCC \define \frac{\TP \times \TN - \FP \times \FN }{\sqrt{(\TP +
      \FP)( \TP + \FN)(\TN + \FP)(\TN + \FN)}} \in [-1,1].
\end{equation}
The main statistical question is the following: For a specific
instance, does using the cardinality constraint as additional
information increase the accuracy and the $\MCC$? Since
\textsf{C$^2$RF} \textsf{p-C$^2$RF}, \textsf{only PP} and \textsf{only BR} solve the same problem, we
only compare the difference of the accuracy and $\MCC$ according to
\begin{equation}
  \label{eq:comparing}
  \overline{\AC} \define \AC_{\pCRF}-\AC_{\RF},
  \quad
  \overline{\MCC} \define \MCC_{\pCRF}-\MCC_{{\RF}},
\end{equation}
where $\AC_{\RF}$ and $\AC_{\pCRF}$ are computed as in~\eqref{AC},
and $\MCC_{\RF}$ and $\MCC_{\pCRF}$ as in~\eqref{MCC}.

\subsection{Discussion of Run Times}
\label{sec:discussion_run_times}

Figure~\ref{fig:runtime} shows the ECDFs for the measured run
times. As expected, \textsf{RF} is the fastest algorithm because
it does not include any binary variable related to the unlabeled
points as the newly proposed models do. It can be seen that $\pCRF$
significantly outperforms $\CRF$. $\CRF$ solves only \SI{58}{\percent}
of the instances within the time limit, while $\pCRF$ solves
\SI{94}{\percent}. This shows that the preprocessing techniques and
the branching rule significantly decrease the run times.
However, by comparing the two lines for ``\textsf{only PP}'' and
``\textsf{only BR}'', we see that most of the speed-up is obtained by
the preprocessing techniques while the branching rule only helps to
improve the performance for a small amount of instances.
Since the branching rule is not harming and sometimes helps, we decide
to include it in what follows.
\begin{figure}
  \centering
  \includegraphics[width=0.6\textwidth]{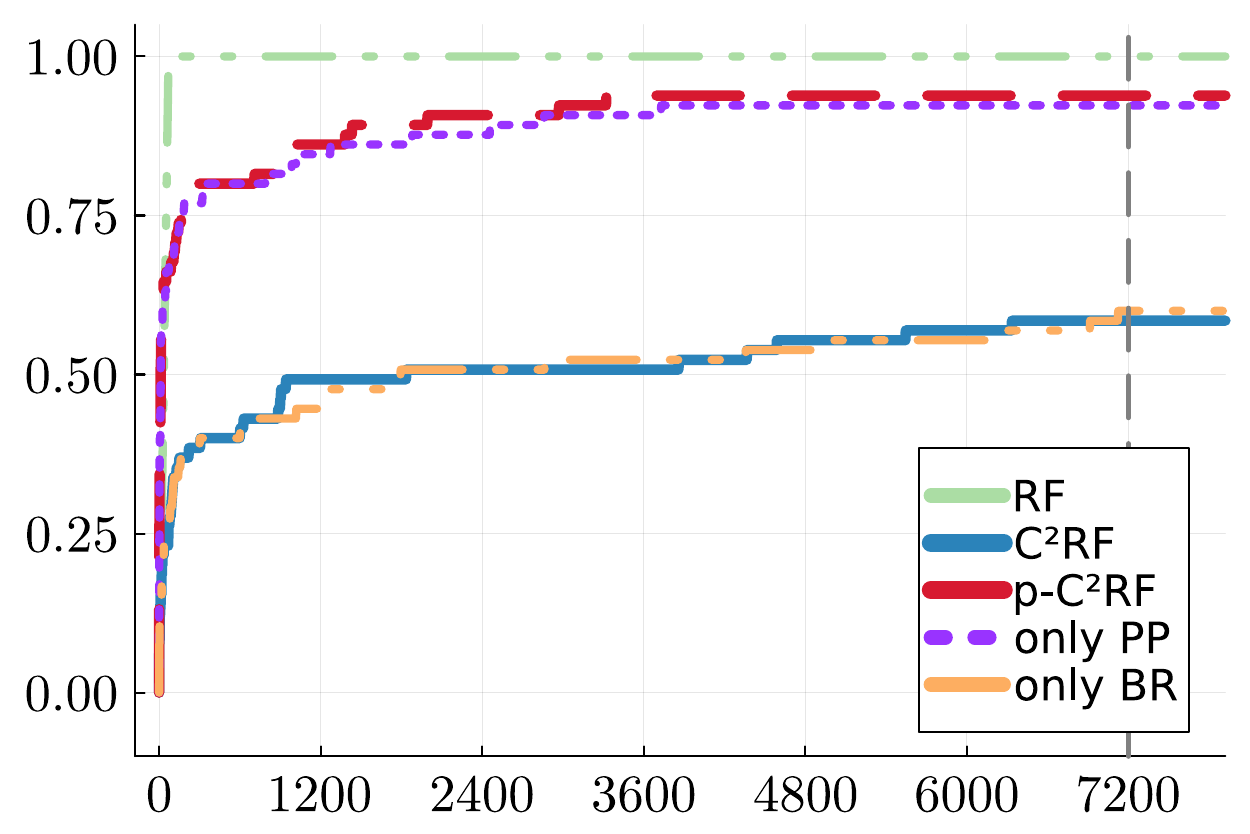}
  \caption{ECDFs for run times (in seconds)}
  \label{fig:runtime}
\end{figure}

\begin{table}
  \caption{Median of run times (in seconds)\rev{. The best value
    over } \rev{all semi-supervised approaches is printed in bold
    for every row of} \rev{ the table.}}
  \label{tab:mediantime}
  \begin{tabular}{l c c c c c}
    \toprule
    ID & \RF & \CRF & \pCRF & \textsf{only PP} &\textsf{only BR}\\
    \midrule
    1 & 0.042 &  3859.72 &  \bf{2.939} &  3.000 &   1249.15\\
    2 & 0.261 & --- & \bf{109.603} & 127.63  & ---  \\
    3 & 0.513 & --- &  \bf{2.865} & \bf{2.865} & ---  \\
    4 & 0.107 &  68.255  &  \bf{12.304} &  12.495 &  76.813 \\
    5 & 0.074 & --- &  \bf{999.798} & 1018.56 & ---  \\
    6 & 0.069 & --- &  --- & --- &  ---  \\
    7 & 0.172 & 1834.94 & \bf{4.054} &  4.149 &  ---  \\
    8 & 0.058 & 3.791 &  \bf{0.168} &  0.191 &  3.585\\
    9 & 0.087 &   899.891 & 1.599 &  \bf{1.568}  & 1018.80\\
    10 & 0.066  &  883.18 &  \bf{144.127} & 182.252 & --- \\
    11 & 1.129  &---  &   204.337 &  \bf{147.684} & ---   \\
    12 &  0.194  &   70.102 & 14.443 & \bf{12.619} & 75.937\\
    13 & 0.229  & 0.986 &  \bf{0.275} &  0.367 &  1.124 \\
    \bottomrule
  \end{tabular}
\end{table}

In Table~\ref{tab:mediantime} we present the median run times of the 5
biased samples for each instance. A ``---'' means that the approach
did not solve at least $3$ of the samples of the
instance within the time limit. \rev{One} can see that \RF almost always
takes less than \SI{1}{\second} to solve the problem. When comparing
the two novel approaches and only the instances that \CRF solves at
least one sample, Table~\ref{tab:mediantime} shows that our techniques
decrease the time computation by \SI{89}{\percent} on average.

\subsection{Discussion of Accuracy and MCC}
\label{sec:discussion_accuracy_mcc}

Observe that for both metrics $\overline{\AC}$ and $\overline{\MCC}$,
a value greater than zero indicates that $\pCRF$ had a better result
than RF. Besides that, the box in the boxplot depicts the range of the
medium \SI{50}{\percent} of the values; \SI{25}{\percent} of the
values are below and \SI{25}{\percent} are above the
box. Figure~\ref{fig:ac&mcc} shows that the $\overline{\AC}$ values
are greater than zero in \SI{75}{\percent} of the results (left
plot). Hence, our proposed method has better accuracy than the
conventional random forest. It can also be seen in
Figure~\ref{fig:ac&mcc} that the $\overline{\MCC}$ values are greater
than zero in most cases (right plot). Therefore, our method has a better
$\MCC$ than \RF.

\begin{figure}
  \centering
  \includegraphics[width=0.47\textwidth]{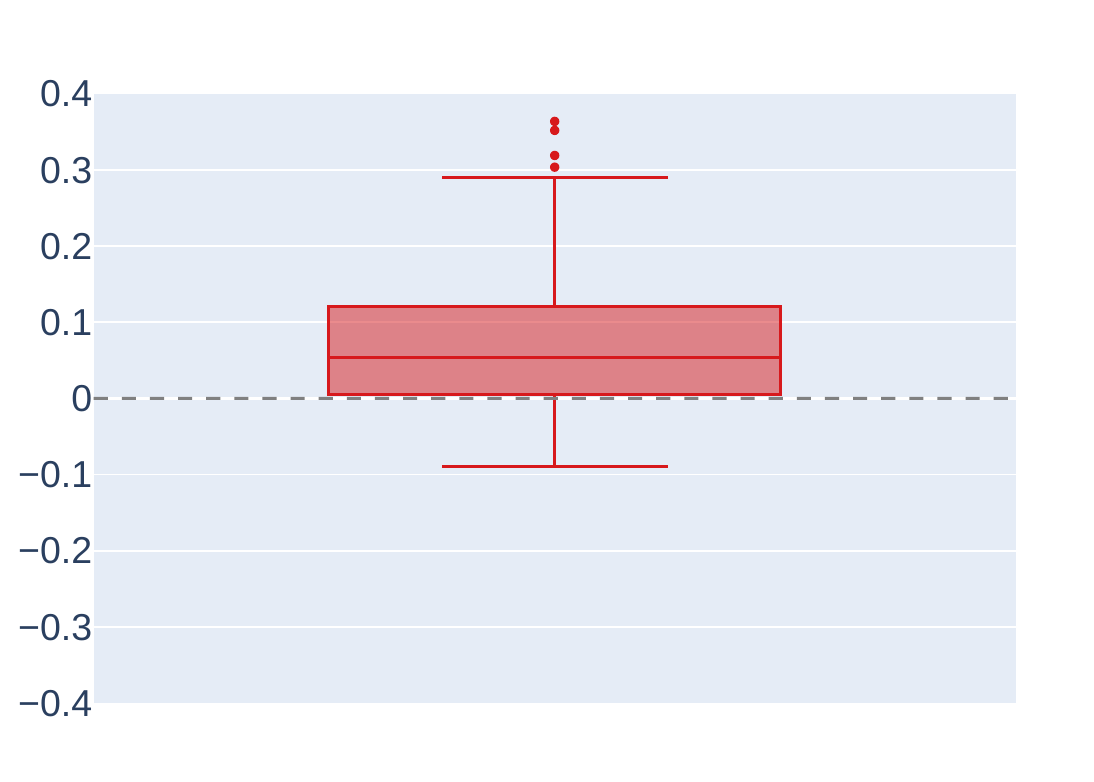}
  \includegraphics[width=0.47\textwidth]{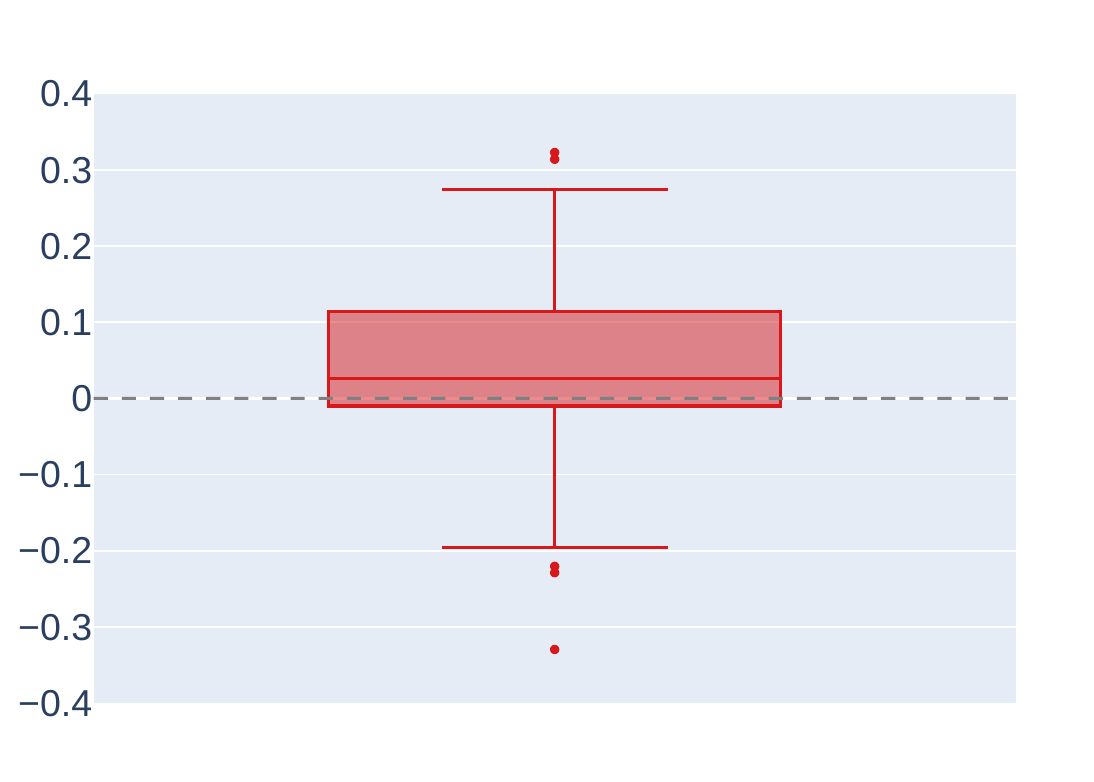}
  \caption{Comparison of $\overline{\AC}$ (left) and $\overline{\MCC}$
    (right); see~\eqref{eq:comparing}}
  \label{fig:ac&mcc}
\end{figure}

\begin{table}
  \caption{Median of \AC and \MCC (in percentage)\rev{. The best value}
    \rev{ for each metric is printed in bold for every row of the table.}}
  \label{tab:medianACandMCC}
  \begin{tabular}{l  c c    c c }
    \toprule
    ID &\multicolumn{2}{ c }{Accuracy} & \multicolumn{2}{ c }{\MCC} \\
    \midrule
       & \RF &  \pCRF & \RF & \pCRF\\
    \midrule
    1 & 62.16 & \bf{72.51} &  \bf{69.28}  & 69.20\\
    2 & 65.14 & \bf{75.03}&    70.35&   \bf{73.13}  \\
    3 & 76.28 &  \bf{77.78}   &  55.66 &    \bf{67.16}  \\
    4 &  61.98 &    \bf{79.64}&     55.17&   \bf{57.09}\\
    5 & 50.80 &   \bf{60.20} &   54.20&   \bf{61.45} \\
    6 &  58.90 & \bf{66.93}&   65.76&   \bf{67.05} \\
    7 & 75.88 &  \bf{78.59} &  \bf{78.55} &  76.47 \\
    8 &  98.58 &   \bf{98.74} &   84.03 &   \bf{84.73} \\
    9 &  81.30 &   \bf{87.59} &  83.95&   \bf{87.57} \\
    10 &  61.35 &  \bf{71.13} &  \bf{71.14}&   70.00 \\
    11 & 24.79 &   \bf{56.69} &  52.19&   \bf{55.98} \\
    12 & \bf{89.10} &    88.72&   \bf{54.61} &   53.57 \\
    13 & 99.43 & \bf{100} & 98.89 & \bf{100}\\
    \bottomrule
  \end{tabular}
\end{table}

When comparing each instance, Table~\ref{tab:medianACandMCC} shows the
median of \AC and \MCC of the 5 biased samples for RF and \pCRF. It
can be seen that, in the majority of instances, our approach has a
greater value of accuracy and \MCC than RF. Especially in terms of
accuracy, we obtained a better median value in 12 of the 13
instances. Regarding \MCC, our approach has a better median value than
\RF in 8 of the 13 instances. When \RF has better \MCC than \pCRF, it
is never better than \SI{2.5}{\percent}.

Figure~\ref{fig:ac&mcc} and Table~\ref{tab:medianACandMCC} show that
using the cardinality constraint of each class as additional
information allows to correctly classify the points with higher
accuracy and better $\MCC$ than with the RF by majority vote.


\section{Conclusion}
\label{sec:conclusion}

For several classification problems, it can be expensive to acquire
labels for the entire population of interest.
Nevertheless, external sources can, in some cases, offer additional
information on how many points are in each class.
For the case of binary classification, we proposed a semi-supervised
random forest that can be modeled using a big-$M$-based MILP
formulation.
We also presented problem-tailored preprocessing techniques and a
branching rule to reduce the computational cost of solving the MILP
model.

Under the condition of simple random sampling, our proposed
semi-supervised method has very similar accuracy and MCC as a
standard random forest.
In many applications, however, the available data come from
non-probability samples.
In this case, the data collection mechanism is largely unknown and
there is the risk of obtaining biased samples.
Our numerical results show that our model has better accuracy and MCC
than the conventional random forest even with a small number of
labeled points and biased samples.

\rev{Let us close with some comments on potential future work based on
  the results of this paper.
  First of all, the main ideas developed here can also be applied to
  other type of ensemble learning settings and is not restricted to
  the random forests considered here.
  Moreover, the resulting mixed-integer problems quickly turn out to
  be hard to solve.
  Hence, any further progress regarding additional tailored solution
  techniques like the ones presented in this paper will increase the
  practical applicability of the approach.}


\section*{Acknowledgements}

The authors thank the DFG for their support within RTG~2126
\enquote{Algorithmic Optimization}.


\section*{Availability of data and materials}
\label{sec:avail-data-mater}

The code developed for this paper is publicly available at
\url{https://github.com/mariaepinheiro/C2RF.jl} and all data can be
downloaded at \url{https://github.com/mariaepinheiro/C2RF}.


\printbibliography

\appendix
\section{Numerical Results for Simple random samples}
\label{sec:num-results-simple-sample}

In Section~\ref{sec:numerical-results} we present a computational
study on non-representative biased samples.
To complement our numerical results, we also present the results for
simple random sampling.
For simple random sampling, each element in the data set has the same
probability $(n/N)$ to be included in the sample of labeled data of
size~$n$.
The instances are the same as described in
Section~\ref{subsec:test-sets}. The computational setup follows the
description in Section~\ref{subsec:computational-setup}. As before,
the used evaluation criteria are $\overline{\AC}$ and
$\overline{\MCC}$ as in~\eqref{eq:comparing}.

\begin{figure}
  \centering
  \includegraphics[width=0.47\textwidth]{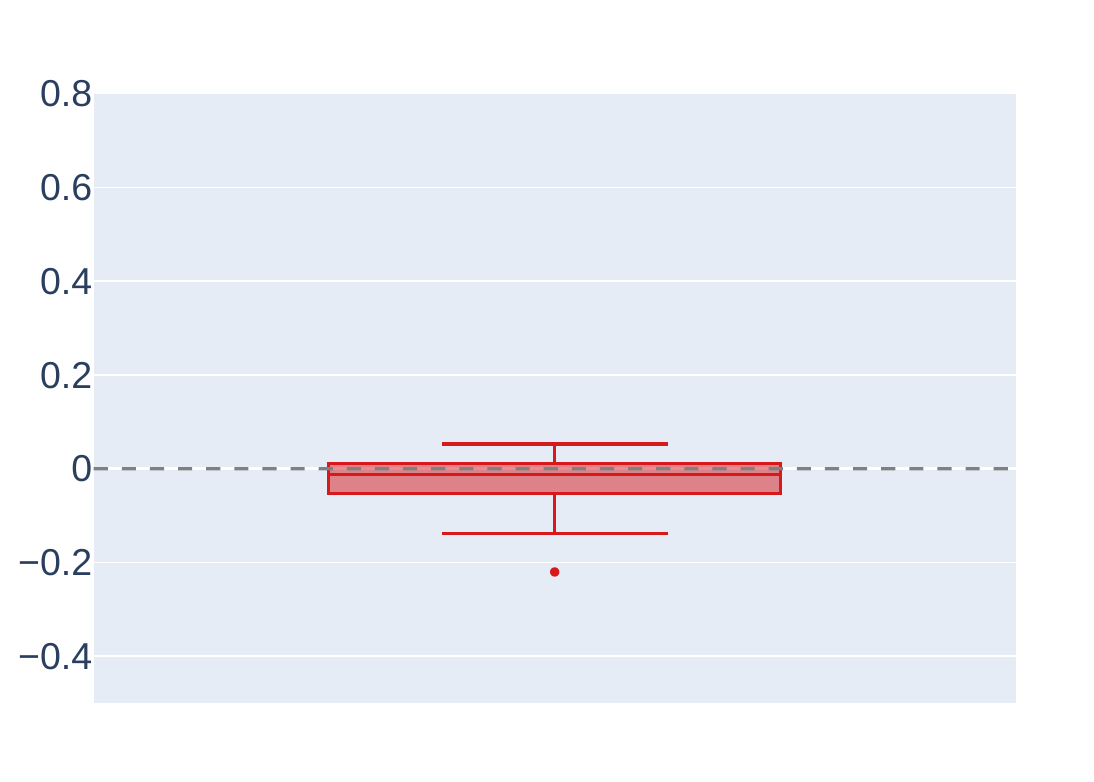}
  \includegraphics[width=0.47\textwidth]{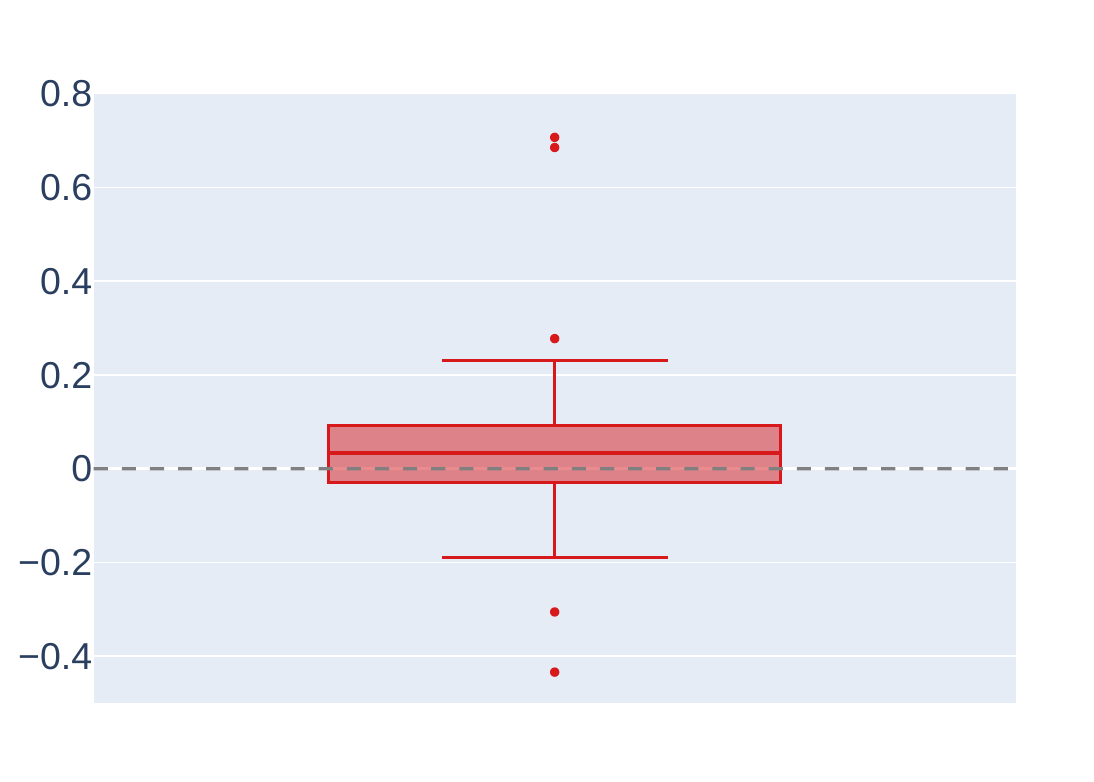}
  \caption{Comparison of $\overline{\AC}$ (left) and $\overline{\MCC}$
    (right); see~\eqref{eq:comparing}}
  \label{fig:ac&mcc-simplesample}
\end{figure}

It can be seen in Figure~\ref{fig:ac&mcc-simplesample} that
\SI{75}{\percent} of the values of $\overline{\AC}$ are between
$-0.05$ and $0.05$ (left plot). Figure~\ref{fig:ac&mcc-simplesample}
(right plot) also shows that $\overline{\MCC}$ has a value greater
than~0 and lower than~0 in \SI{50}{\percent} of the cases.

Table~\ref{tab:medianACandMCC-simplesample} shows the median of $\AC$
and $\MCC$ for each instance for $\pCRF$ and \RF.
In the majority of instances, our approach has a better or a very
similar accuracy and MCC compared to the conventional random forest.
Especially in terms of MCC, this is the case for all 13 instances.
From Figure~\ref{fig:ac&mcc-simplesample} and
Table~\ref{tab:medianACandMCC-simplesample} we can conclude that the
accuracy and MCC of our proposed method $\pCRF$ and the standard
random forest are very similar in the context of simple random
sampling.
This is expected because the cardinality constraint aims to
balance the class distribution and since the sample is not biased,
this constraint does not introduce additional meaningful information
to the problem.

\begin{table}
  \caption{Median of \AC and \MCC (in percentage)\rev{. The best value}
    \rev{ for each metric is printed in bold for every row of the table.}}
  \label{tab:medianACandMCC-simplesample}
  \begin{tabular}{l  c c    c c }
    \toprule
    ID &\multicolumn{2}{ c }{Accuracy} & \multicolumn{2}{ c }{\MCC} \\
    \midrule
       & \RF &  \pCRF & \RF & \pCRF\\
    \midrule
    1 & \bf{76.68}  &  76.32 & \bf{71.73} &  71.34 \\
    2 & \bf{78.28}  & 78.14 &  75.82&   \bf{75.86} \\
    3 & \bf{81.32} &   80.85 &  70.79&   \bf{73.70} \\
    4 & \bf{85.86} &   76.57&  50.0 &     \bf{51.63} \\
    5 & \bf{71.81}&    67.35 &    \bf{72.61}&   67.35 \\
    6 & 84.32 &   \bf{85.09} &   \bf{85.32} &   85.10 \\
    7 & 76.75 &  \bf{77.10}&  71.97&    \bf{74.10} \\
    8 & 97.68 &  \bf{98.61} &  50.0 &     \bf{84.43} \\
    9 & \bf{88.15} &   87.17 &  \bf{88.17}&   87.15 \\
    10 & 78.57 &  \bf{79.84} &  75.13 &   \bf{77.23} \\
    11 & \bf{75.36} &   67.71 &  50.0  &     \bf{55.89} \\
    12 & \bf{93.23} &  87.01&  50.0  &   \bf{52.62} \\
    13 & 97.64 &  \bf{100} &     95.37&  \bf{100}\\
    \bottomrule
  \end{tabular}
\end{table}


\end{document}